\newtheorem{theorem}{Theorem}[section]
\newtheorem{corollary}[theorem]{Corollary}
\theoremstyle{remark}
\newtheorem{remark}[theorem]{Remark}
\theoremstyle{definition}
\numberwithin{equation}{section}
\title{Genera of two-bridge knots and epimorphisms of their knot groups}
\author{Masaaki Suzuki}
\address{Department of Frontier Media Science, 
Meiji University, 
4-21-1 Nakano, Nakano-ku, Tokyo, 164-8525, Japan}
\email{macky@fms.meiji.ac.jp}
\author{Anh T. Tran}
\address{Department of Mathematical Sciences, 
The University of Texas at Dallas, Richardson,
TX 75080, USA}
\email{att140830@utdallas.edu}
\keywords{knot group, epimorphism, two-bridge knot, genus}
\subjclass[2010]{57M25, 57M27}
\begin{document}

\maketitle

\begin{abstract}
Let $K,K'$ be two-bridge knots of genus $n,k$ respectively. 
We show the necessary and sufficient condition of $n$ in terms of $k$ 
that there exists an epimorphism from the knot group of $K$ onto that of $K'$. 
\end{abstract}

\section{Introduction}\label{intro}

Let $K$ be a knot in $S^3$ and $G(K)$ the knot group, that is, 
the fundamental group of the complement of $K$ in $S^3$.  
We denote by $g(K)$ the genus of $K$. 
Recently, many papers have investigated epimorphisms between knot groups. 
In particular, Simon's conjecture in \cite{kirbylist}, 
which states that every knot group maps onto at most finitely many knot groups, 
was settled affirmatively in \cite{agolliu}. 
In the same Kirby's problem list \cite{kirbylist}, Simon also proposed another conjecture. 
Namely, if there exists an epimorphism from $G(K)$ onto $G(K')$, 
then is $g(K)$ greater than or equal to $g(K')$? 
This problem is also mentioned in \cite{kitano-suzuki2}. 
It is known that if there exists an epimorphism from $G(K)$ onto $G(K')$, 
then the Alexander polynomial of $K$ is divisible by that of $K'$. 
Moreover, Crowell \cite{Crowell} showed that 
the genus of an alternating knot is equal to a half of the degree of the Alexander polynomial. 
Then the above conjecture is true for alternating knots, especially two-bridge knots. 

In this paper, 
we give a more explicit condition on genera of two-bridge knots $K$ and $K'$
such that there exists an epimorphism between their knot groups. 
As a corollary, we show that 
if there exists an epimorphism from $G(K)$ onto $G(K')$, then $g(K) \geq 3 g(K') - 1$. 

A knot is called {\it minimal} if its knot group admits epimorphisms onto the knot groups of only the trivial knot and itself. 
Many types of minimal knots are already shown in 
\cite{kitano-suzuki3}, \cite{szk}, \cite{Burde}, \cite{MPV}, \cite{N}, \cite{NT}, and \cite{NST}.  
By using the main theorem of this paper, we obtain several types of minimal knots. 
For example, 
a two-bridge knot of genus $2$ is minimal 
if and only if it is not the two-bridge knot $C[2a,4b,4a,2b]$ in Conway's notation 
for any non-zero integers $a,b$. 

\section{Ohtsuki-Riley-Sakuma Construction}\label{sect:ors}

In this section, we review some known facts about two-bridge knots, 
see \cite{bzh} and \cite{murasugi} for example. 
Especially, we recall Ohtsuki-Riley-Sakuma construction of epimorphisms between two-bridge knot groups. 

It is known that a two-bridge knot corresponds to a rational number 
and that it can be expressed as a continued fraction
\[
 [a_1,a_2, \ldots, a_{m-1},a_m] = 
\frac{1}{a_1 + \frac{1}{a_2 + \frac{1}{\ddots \frac{1}{a_{m-1} + \frac{1}{a_m}}}}} ,
\]
where $a_1 > 0$. 
We define the {\it length} of the continued fraction  to be 
\[
\ell ([a_1,a_2, \ldots, a_{m-1},a_m]) = m .
\] 
Note that the length depends on the choice of continued fractions. 
For example, we can delete zeros in a continued fraction  by using 
\begin{eqnarray*}
 [a_1,a_2,\ldots,a_{i-2}, a_{i-1},0,a_{i+1},a_{i+2}, \ldots, a_m] \\
= [a_1,a_2,\ldots,a_{i-2}, a_{i-1} + a_{i+1}, a_{i+2},\ldots, a_m] .
\end{eqnarray*}
Then, we can reduce the length by $2$, 
if the continued fraction contains a $0$. 

\begin{theorem}[Ohtsuki-Riley-Sakuma \cite{ORS}, Agol \cite{agol}, Aimi-Lee-Sakuma \cite{ALS}]\label{thm:ors}
Let $K(r), K(\tilde{r})$ be $2$-bridge knots, 
where $r = [a_1,a_2,\ldots,a_m]$. 
There exists an epimorphism $\varphi : G(K(\tilde{r})) \to G(K(r))$ 
if and only if $\tilde{r}$ can be written as 
\[
 \tilde{r} = 
[\varepsilon_1 {\bf a}, 2 c_1, 
\varepsilon_2 {\bf a}^{-1}, 2 c_2, 
\varepsilon_3 {\bf a}, 2 c_3, 
\varepsilon_4 {\bf a}^{-1}, 2 c_4, 
\ldots, 
\varepsilon_{2n} {\bf a}^{-1}, 2 c_{2n}, \varepsilon_{2n+1} {\bf a}] , 
\]
where ${\bf a} = (a_1, a_2,\ldots,a_m), {\bf a}^{-1} = (a_m, a_{m-1},\ldots,a_1)$, 
$\varepsilon_i = \pm 1 \, \, (\varepsilon_1 = 1)$, and $c_i \in {\mathbb Z}$. 
\end{theorem}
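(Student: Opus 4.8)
The plan is to prove the two implications separately, as they have entirely different characters. The sufficiency (``if'') direction is the explicit Ohtsuki--Riley--Sakuma construction: given $\tilde r$ in the stated block form, I would build the epimorphism $\varphi$ by hand from a two-generator one-relator presentation of the target group. The necessity (``only if'') direction is the completeness statement, and I would attack it through parabolic $\SL(2,\C)$-representations together with a combinatorial rigidity result for continued fractions; essentially all of the difficulty lives here.

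For sufficiency I would first record the standard Riley-type presentation $G(K(r)) = \la a,b \mid a\,w = w\,b\ra$, where $a,b$ are meridians and $w = w(a,b)$ is the word read off from the continued fraction $r=[a_1,\ldots,a_m]$. Reading the block structure $\tilde r = [\varepsilon_1\mathbf a, 2c_1, \varepsilon_2\mathbf a^{-1},\ldots]$, one sees that the defining word of $K(\tilde r)$ decomposes into signed copies of $w$ (one for each $\pm\mathbf a$ or $\pm\mathbf a^{-1}$ block) interleaved with contributions of the even entries $2c_i$. I would then define $\varphi:G(K(\tilde r))\to G(K(r))$ by sending the two meridian generators of $K(\tilde r)$ to $a$ and $b$; the point is that each $\mathbf a^{\pm 1}$ block maps to $w^{\pm 1}$, while the \emph{evenness} of the $2c_i$ guarantees that consecutive blocks are glued by meridian powers that match up as parabolic conjugates, so that the relator of $G(K(\tilde r))$ maps to a consequence of $a\,w=w\,b$. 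Surjectivity is then immediate, since the image already contains $a$ and $b$, which generate $G(K(r))$. Verifying that the relator is preserved is a finite but bookkeeping-heavy computation in the free group, best organized by induction on $n$, and is the routine part I would not grind through.

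For necessity I would pass to representations. Composing an epimorphism $\varphi:G(K(\tilde r))\to G(K(r))$ with a faithful discrete parabolic representation of $G(K(r))$ produces a parabolic representation $\rho$ of $G(K(\tilde r))$ whose image is exactly the image of $G(K(r))$. Following Agol and Aimi--Lee--Sakuma, the strategy is to classify all parabolic generating pairs of a two-bridge knot group via a Markoff-type trace recursion running along the continued fraction, and then to show that any $\tilde r$ carrying such a representation with the prescribed image is forced into the alternating palindromic block form; geometrically this corresponds to a branched fold map between the two Conway spheres. The trace recursion, and the rigidity statement controlling which index sequences $(\varepsilon_i,c_i)$ can actually occur, is the combinatorial core.

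The main obstacle is precisely this necessity direction. Producing maps is elementary, but \emph{ruling out} epimorphisms not coming from the construction requires the full hyperbolic geometry of two-bridge complements (to reduce everything to parabolic representations) together with a delicate combinatorial rigidity theorem for continued fractions. This completeness is exactly the content contributed by Agol and by Aimi--Lee--Sakuma, and I would not expect a short self-contained argument to recover it.
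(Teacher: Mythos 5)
The first thing to say is that the paper contains no proof of this statement to compare against: Theorem~\ref{thm:ors} is imported wholesale from Ohtsuki--Riley--Sakuma, Agol, and Aimi--Lee--Sakuma, and the authors use it as a black box. Your outline is therefore being measured against the literature rather than against anything in this paper, and at that level it is an accurate roadmap: the ``if'' direction really is the explicit ORS construction on a two-generator presentation, with the evenness of the $2c_i$ and the alternation of $\mathbf{a}$ with $\mathbf{a}^{-1}$ being exactly what makes the relator of $G(K(\tilde r))$ map to a consequence of $aw=wb$; and the ``only if'' direction really does go through the classification of parabolic generating pairs of $2$-bridge link groups (Agol's announced result, written up by Aimi--Lee--Sakuma), reducing an arbitrary epimorphism to one induced by a continuous map respecting the Conway sphere decomposition. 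You have also correctly located where the difficulty sits.

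That said, as a proof your proposal has two genuine gaps, both of which you flag but neither of which you close. In the sufficiency direction, the claim that each $\varepsilon_i\mathbf{a}^{\pm 1}$ block of the word for $\tilde r$ maps to $w^{\pm 1}$ conjugated compatibly is precisely the content of the ORS computation; it is not automatic from the block form of the continued fraction, and in particular your assertion that the meridian generators of $K(\tilde r)$ go to $a$ and $b$ on the nose needs care (in ORS they go to meridians only up to conjugation and inversion, and tracking those conjugating elements is where the $2c_i$ enter). In the necessity direction you have essentially restated the theorem of Agol and Aimi--Lee--Sakuma rather than proved it: the step from ``every epimorphism gives a parabolic generating pair of the image group'' to ``the source continued fraction has the stated palindromic block form'' is the entire theorem, and no Markoff-type recursion is actually set up or analyzed in your sketch. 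Since the paper itself treats both directions as cited facts, your proposal is consistent with how the result is used here, but it should be presented as a citation with commentary, not as a proof.
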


Remark that 
we can exclude the case where $c_i=0$ and $\varepsilon_i \cdot \varepsilon_{i+1}=-1$ 
without loss of generality (see \cite{szk} for details). 

A continued fraction $[a_1,a_2, \ldots,a_m]$ is called {\it even} 
if all $a_i$'s are even integers. 
Moreover, it is called {\it reduced} if all $a_i$'s are non-zero. 

\section{Main Theorem}\label{sect:mainthm}

First, we define a set $S_k$ as follows: 
\[
S_k = {\mathbb N} \cap \left(\bigcup_{r = 1}^{k-2} [(2 r + 1) k + r + 1, (2 r + 3) k - r - 2] \right) .
\]
For $j \in {\mathbb Z}$, we let ${\mathbb Z}_{\geq j}$ denote 
the set of all integers greater than or equal to $j$. 

In this section, we show the following theorem. 

\begin{theorem}\label{thm:mainthm}
Let $K'$ be a two-bridge knot of genus $k$. 
There exists a two-bridge knot $K$ of genus $n$ such that 
the knot group $G(K)$ admits an epimorphism onto $G(K')$ 
if and only if 
\[
n \in {\mathbb Z}_{\geq (3k-1)} \setminus S_k .
\]
\end{theorem}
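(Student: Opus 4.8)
The plan is to translate the existence of an epimorphism into the combinatorics of continued fractions via Theorem~\ref{thm:ors}, and then to reduce the whole statement to a purely arithmetic fact about a union of intervals. Throughout I represent $K' = K(r)$ by its \emph{even} continued fraction ${\bf a} = (a_1,\ldots,a_{2k})$, all entries even and nonzero; recall (Crowell, as discussed in Section~\ref{intro}, together with the standard genus formula for two-bridge knots) that the genus of a two-bridge knot equals half the length of its reduced even continued fraction, so that $\ell({\bf a}) = 2k$. By Theorem~\ref{thm:ors}, a two-bridge knot $K \neq K'$ admits an epimorphism $G(K) \twoheadrightarrow G(K')$ if and only if $K = K(\tilde r)$ with $\tilde r = [\varepsilon_1 {\bf a}, 2c_1, \ldots, \varepsilon_{2N+1}{\bf a}]$ as in the theorem, for some $N \geq 1$ (the value $N = 0$ gives $\tilde r = [{\bf a}]$, i.e. $K = K'$). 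The first task is therefore to compute $g(K(\tilde r))$ in terms of $N$ and the data $(\varepsilon_i, c_i)$.

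First I would read off the even continued fraction of $\tilde r$. Since every $a_i$ is even and each connector $2c_i$ is even, the displayed continued fraction for $\tilde r$ is already even, and its only possible zeros are the connectors with $c_i = 0$. By the Remark following Theorem~\ref{thm:ors} we may assume $c_i = 0$ occurs only when $\varepsilon_i \varepsilon_{i+1} = 1$, and in that case deleting the zero merges the two neighbouring block-endpoints into $2(\pm a_1)$ or $2(\pm a_{2k})$, which is again even and nonzero. Hence no cascade of cancellations occurs: if $t$ of the $2N$ connectors vanish, the reduced even continued fraction of $\tilde r$ has length $(2N+1)\cdot 2k + 2N - 2t$, so that
\[
g(K(\tilde r)) = (2N+1)k + N - t .
\]
Taking all $\varepsilon_i = 1$ shows every value $t \in \{0,1,\ldots,2N\}$ is realizable, so the genera attainable for a fixed $N$ form exactly the integer interval $I_N = [\,N(2k-1)+k,\ N(2k+1)+k\,]$, and the set of all attainable genera is $A_k = \bigcup_{N \geq 1} I_N$. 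Both directions of the theorem then reduce, via Theorem~\ref{thm:ors}, to the identity $A_k = \mathbb{Z}_{\geq 3k-1} \setminus S_k$.

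It then remains to prove this arithmetic identity. Writing $L_N, R_N$ for the endpoints of $I_N$, a direct computation gives $L_{N+1} - R_N = 2k - 1 - 2N$. This is $\leq 1$ exactly when $N \geq k-1$, so for $N \geq k-1$ the intervals are contiguous and their union is $[L_{k-1}, \infty)$; whereas for $1 \leq N \leq k-2$ a genuine gap appears, consisting of the integers in $[\,R_N + 1,\ L_{N+1} - 1\,] = [\,(2N+1)k + N + 1,\ (2N+3)k - N - 2\,]$. Comparing with the definition of $S_k$ (taking $r = N$) shows these gaps are precisely the intervals constituting $S_k$. Since $L_1 = 3k-1$, combining the two regimes yields $A_k = \mathbb{Z}_{\geq 3k-1}\setminus S_k$, as claimed; the excluded case $K = K'$ contributes only $n = k$, which lies outside this range for $k \geq 1$.

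I expect the genus computation to be the main obstacle. The crux is establishing that the ORS continued fraction, after deleting the vanishing connectors, \emph{is} the reduced even continued fraction of $\tilde r$, so that its length genuinely computes the genus; this rests on the verification that the merges created at the deleted connectors are always of the form $2(\pm a_1)$ or $2(\pm a_{2k})$ and hence never produce a new zero that would trigger further reduction. Once the formula $g(K(\tilde r)) = (2N+1)k + N - t$ together with the full range $0 \leq t \leq 2N$ is in hand, the interval arithmetic of the final step is routine.
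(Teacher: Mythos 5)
Your proposal is correct and follows essentially the same route as the paper: translate via Theorem~\ref{thm:ors}, compute the length of the reduced even continued fraction after deleting the vanishing connectors to get $n=(2r+1)k+r-t$, and then do the interval/gap arithmetic that produces $S_k$. If anything, you are slightly more careful than the paper on two points it leaves implicit, namely that the deletions cannot cascade (since the merged entries are $2(\pm a_1)$ or $2(\pm a_{2k})$, using the Remark after Theorem~\ref{thm:ors}) and that every $t\in\{0,\ldots,2N\}$ is realizable for the converse direction.
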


\begin{proof}
Recall that the length of the reduced even continued fraction corresponding to a two-bridge knot is 
twice the genus of the knot, see \cite{Cromwell} for example. 
A continued fraction of a rational number corresponding to $K'$ can be written as 
$[a_1,a_2, \ldots, a_{2k}]$ where all $a_i$'s are even and non-zero, 
since the genus of $K'$ is $k$. 
Suppose that there exists an epimorphism from $G(K)$ onto $G(K')$. 
By Theorem \ref{thm:ors}, a rational number corresponding to $K$ admits a continued fraction 
in the form 
\[
[\varepsilon_1 {\bf a}, 2 c_1, 
\varepsilon_2 {\bf a}^{-1}, 2 c_2, 
\varepsilon_3 {\bf a}, 2 c_3, 
\varepsilon_4 {\bf a}^{-1}, 2 c_4, 
\ldots, 
\varepsilon_{2r} {\bf a}^{-1}, 2 c_{2r}, \varepsilon_{2r+1} {\bf a}] 
\]
where ${\bf a} = (a_1,a_2, \ldots, a_{2k})$. 
As mentioned in Section \ref{sect:ors}, 
if $c_i = 0$, then we can reduce the length of  the continued fraction by $2$. 
After deleting $0$, the length of the continued fraction of $K$ is 
\[
2n  = (2 r + 1) \ell ([a_1,a_2, \ldots, a_{2k}]) + \sum_{i = 1}^{2r} w_i =  2 (2 r + 1) k + \sum_{i = 1}^{2r} w_i
\]
where 
\[
w_i = 
\left\{
\begin{array}{cl}
1 & \mbox{ if } c_i \neq 0 \\
-1 & \mbox{ if } c_i = 0 \\
\end{array}
\right. .
\]
We define $\ell$ as 
\[
\ell = \frac{1}{2} \sum_{i = 1}^{2r} w_i = r - \sharp \{ i \, | \, c_i = 0, 1 \leq i \leq 2r \} .
\]
Then $-r \leq \ell \leq r$ and $n = (2 r + 1) k + \ell$. Namely, 
\[
n \in  {\mathbb N} \cap \left(\bigcup_{r \in {\mathbb N}} [(2 r + 1) k - r, (2 r + 1) k +r] \right) .
\]
Here if $r \geq k - 1$, 
each interval does not have a gap with the next interval. 
Therefore the complement of the set to which $n$ belongs is 
\begin{equation}
 {\mathbb N} \cap \left([1,3 k -2] \cup \bigcup_{r = 1}^{k-2} [(2 r + 1) k + r + 1, (2 r + 3) k - r -2] \right)  \label{eq:sk}.
\end{equation}
Conversely, if $n$ belongs to ${\mathbb Z}_{\geq (3k-1)} \setminus S_k$, we can construct 
a two-bridge knot $K$ of genus $n$ whose knot group admits an epimorphism onto $G(K')$ as above. 
\end{proof}

\begin{corollary}\label{cor:genusinequality}
Let $K$ be a two-bridge knot and $K'$ a knot.  
If there exists an epimorphism $\varphi : G(K) \to G(K')$, 
then 
\begin{equation}\label{eq:genusinequality}
g(K) \geq 3 g(K') -1 . 
\end{equation}
\end{corollary}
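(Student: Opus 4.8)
The plan is to derive the inequality from Theorem \ref{thm:mainthm}, reading off the smallest achievable genus, after first reducing to the case in which $K'$ is itself a two-bridge knot. If $K'$ is the trivial knot, then $g(K')=0$ and the asserted inequality reads $g(K)\geq -1$, which holds because genus is non-negative; so I may assume $K'$ is non-trivial and, as in the Ohtsuki-Riley-Sakuma parametrization underlying Theorem \ref{thm:mainthm} (where the index $r$ ranges over $\mathbb{N}=\{1,2,\dots\}$), that $\varphi$ is not an isomorphism.

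First I would record the two standard facts quoted in the introduction: an epimorphism $\varphi\colon G(K)\to G(K')$ forces the Alexander polynomial $\Delta_{K'}$ to divide $\Delta_K$, and, since $K$ is two-bridge and hence alternating, Crowell's theorem gives $2g(K)=\deg\Delta_K$. Next I would argue that $G(K')$ is generated by the images under $\varphi$ of the two meridional generators of the two-bridge group $G(K)$; together with the fact that $\varphi$ induces an isomorphism on $H_1\cong\mathbb{Z}$, this is the input I would use to pin $K'$ down as a two-bridge knot, so that Theorem \ref{thm:ors} and Theorem \ref{thm:mainthm} become applicable with $k=g(K')$.

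Once $K'$ is known to be two-bridge of genus $k=g(K')$, the conclusion is immediate: by Theorem \ref{thm:mainthm} the genus $n=g(K)$ lies in $\mathbb{Z}_{\geq(3k-1)}\setminus S_k$, and $3k-1$ is the minimum of this set, since every interval forming $S_k$ corresponds to some $r\geq 1$ and therefore begins at $(2r+1)k+r+1\geq 3k+2>3k-1$, so none of $3k-1,3k,3k+1$ is removed. Hence $g(K)=n\geq 3k-1=3g(K')-1$, as claimed.

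The main obstacle is the reduction step: justifying that an epimorphism from a two-bridge knot group onto an arbitrary knot group forces the target to be two-bridge. The delicate point is that the images $\varphi(x),\varphi(y)$ of the two meridians of $K$ need not themselves be meridians of $K'$, only elements representing a generator of $H_1(G(K'))$; identifying $G(K')$ as a two-bridge (equivalently, meridional-rank-two) knot group therefore requires genuine care rather than a formal count of generators. A safer alternative that sidesteps this is to assume at the outset that $K'$ is alternating and combine $\deg\Delta_{K'}\leq\deg\Delta_K=2g(K)$ with Crowell's equality $2g(K')=\deg\Delta_{K'}$; but that route recovers only $g(K)\geq g(K')$, and restoring the full factor of three genuinely relies on the block structure of the Ohtsuki-Riley-Sakuma continued fraction encoded in Theorem \ref{thm:mainthm}, which is available only after $K'$ has been placed in the two-bridge setting.
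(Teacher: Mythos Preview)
Your approach is the paper's own: the corollary is stated without proof and is meant to be immediate from Theorem~\ref{thm:mainthm}, reading off that the minimum of $\mathbb{Z}_{\geq(3k-1)}\setminus S_k$ is $3k-1$; your check that every interval in $S_k$ begins at $(2r+1)k+r+1\geq 3k+2$ is correct and suffices.

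You are also right to isolate the reduction to two-bridge $K'$ as the only nontrivial step, and to note that the paper itself does not spell it out---Corollary~\ref{cor:genusinequality} allows $K'$ to be an arbitrary knot, while Theorem~\ref{thm:mainthm} and Theorem~\ref{thm:ors} both assume $K'$ is two-bridge. Your caution that the mere two-generator property of $G(K')$ is not enough, and that one must know the images of meridians are again meridians, is exactly the issue. The way this gap is closed in the literature is via the result (see Boileau--Boyer--Reid--Wang, \emph{Simon's conjecture for two-bridge knots}, building on Gonz\'alez-Acu\~na--Ram\'{\i}rez and the Boileau--Zimmermann characterisation of two-bridge knots by meridional rank) that any nontrivial knot receiving an epimorphism from a two-bridge knot group is itself two-bridge. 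With that external input your argument is complete and coincides with what the paper intends.
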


\begin{remark}
We denote by $c(K)$ the crossing number of a knot $K$. 
Let $K$ be a two-bridge knot and suppose that 
there exists an epimorphism from $G(K)$ onto the knot group $G(K')$ of another knot $K'$. 
By the previous paper \cite{szk}, the following inequality holds 
\begin{equation}\label{eq:crossinginequality}
c(K) \geq 3 c(K') .
\end{equation}
Moreover, for a given two-bridge knot $K'$, 
we can construct a two-bridge knot $K$ with any crossing number satisfying the inequality (\ref{eq:crossinginequality}) such that $G(K)$ admits an epimorphim onto $G(K')$. 
However, Theorem \ref{thm:mainthm} implies 
we can not always construct a two-bridge knot $K$ of any genus 
even if it satisfies the inequality (\ref{eq:genusinequality}). 
More precisely, 
if $g(K) \geq 3 g(K') -1$ but $g(K) \in S_{g(K')}$, 
then there does not exist an epimorphism from $G(K)$ onto $G(K')$.  
Note that the cardinality of $S_k$ is 
\[
\sum_{r=1}^{k-2} ((2r+ 3)k - r - 2) - ((2r+1)k + r + 1) +1 = (k-1)(k-2) 
\]
and that of the set (\ref{eq:sk}) is 
\[
3k - 2 + \# S_k = 3k - 2 + (k-1)(k-2) = k^2 .
\]
\end{remark}

\section{Small genus}

In this section, we see some examples of small genus. 
Namely, 
for a small given $n$, we describe the continued fractions of two-bridge knots $K$ of genus $n$ 
which admit epimorphisms onto another knot $K'$ of genus $k$.  
Note that by the argument of the proof of Theorem \ref{thm:mainthm}, we have the following 
\begin{align}
&(2 r + 1) k - r \leq n \leq (2 r + 1) k + r  \label{ineq:knr}, \\
& \sharp \{ i \, | \, c_i = 0, 1 \leq i \leq 2r \} \label{eq:ci} = (2 r + 1) k + r - n .
\end{align}

{\it Case: $n=2$.} 
By Corollary \ref{cor:genusinequality}, the genus of $K'$ is $1$. 
Then we can take $[2a, 2b]$ for a continued fraction of $K'$, where $a,b \in {\mathbb Z} \setminus \{0 \}$. 
The inequality (\ref{ineq:knr}) implies $r=1$ and the equation (\ref{eq:ci}) implies that all $c_i$'s are $0$. 
Therefore the continued fraction of $K$ is 
\[
[2a, 2b, 0, 2b, 2a, 0, 2a,2b] = [2a, 4b, 4a, 2b] . 
\]
Furthermore, if a continued fracion of a two bridge knot of genus $2$ can not be expressed in this form, 
then this knot is minimal. 

{\it Case: $n=3$.} 
Similarly, the genus of $K'$ is $1$ and $[2a, 2b]$ can be taken as a continued fraction of $K'$. 
The inequality (\ref{ineq:knr}) implies $r=1$ or $2$.  
When $r=1$, one $c_i$ is $0$ and the other $c_i$ is not $0$ by the equation (\ref{eq:ci}). 
Then the continued fraction of $K$ is 
\[
[2a, 2b, 0, 2b, 2a, 2 c_2, 2 \varepsilon_3 a,2 \varepsilon_3 b] = [2a, 4b, 2a, 2c_2, 2 \varepsilon_3 a, 2 \varepsilon_3 b]  
\]
up to mirror image, where $\varepsilon_3 = \pm 1$. 
When $r=2$, all $c_i$'s are $0$ by the equation (\ref{eq:ci}). 
Then the continued fraction of $K$ is 
\[
[2a, 2b, 0, 2b, 2a, 0, 2a, 2b, 0, 2b, 2a, 0, 2a, 2b] = [2a, 4b, 4a, 4b, 4a, 2b]. 
\]

{\it Case: $n=4$.} 
The genus of $K'$ is $1$ and a continued fraction of $K'$ is $[2a, 2b]$. 
The inequality (\ref{ineq:knr}) implies $r=1,2,3$.  
When $r=1$, all $c_i$'s are not $0$. 
Then the continued fraction of $K$ is 
\[
[2a, 2b, 2 c_1, 2\varepsilon_2 b, 2 \varepsilon_2 a, 2 c_2, 2 \varepsilon_3 a,2 \varepsilon_3 b]
\]
up to mirror image, where $\varepsilon_2, \varepsilon_3 = \pm 1$. 
When $r=2$, three $c_i$'s are $0$ and one $c_i$ is not $0$. 
Then the continued fraction of $K$ is 
\[
[2a, 2b, 0, 2b, 2a, 0, 2a, 2b, 0, 2b, 2a, 2 c_4, 2 \varepsilon_5 a, 2 \varepsilon_5 b] =
[2a, 4b, 4a, 4b, 2a, 2 c_4, 2 \varepsilon_5 a, 2 \varepsilon_5 b]
\]
or
\begin{align*}
&[2a, 2b, 0, 2b, 2a, 0, 2a, 2b, 2 c_3, 2 \varepsilon_4 b, 2 \varepsilon_4 a, 0, 2 \varepsilon_4 a, 2 \varepsilon_4 b] \\
&= [2a, 4b, 4a, 2b, 2 c_3, 2 \varepsilon_4 b, 4 \varepsilon_4 a, 2 \varepsilon_4 b]
\end{align*}
up to mirror image, where $\varepsilon_4, \varepsilon_5 = \pm 1$. 
When $r=3$, all $c_i$'s are $0$. 
Then the continued fraction of $K$ is 
\begin{align*}
&[2a, 2b, 0, 2b, 2a, 0, 2a, 2b, 0, 2b, 2a, 0, 2a, 2b, 0, 2b, 2a, 0, 2a, 2b] \\
&= [2a, 4b, 4a, 4b, 4a, 4b,4a,2b]. 
\end{align*}

{\it Case: $n=5$.} 
In this case, the genus of $K'$ is $1$ or $2$ by Corollary \ref{cor:genusinequality}.
First, we consider the case that the genus of $K'$ is $1$ and that a continued fraction of $K'$ is $[2a, 2b]$. 
We list the continued fractions of $K$: 
\begin{align*}
&[2a, 2b, 0, 2b, 2a, 0, 2a, 2b, 2 c_3, 2 \varepsilon_4 b, 2 \varepsilon_4 a, 2 c_4, 2 \varepsilon_5 a, 2 \varepsilon_ 5b] \\
&= [2a, 4b, 4a, 2b, 2 c_3, 2 \varepsilon_4 b, 2 \varepsilon_4 a, 2 c_4, 2 \varepsilon_5 a, 2 \varepsilon_ 5b], \\
&[2a, 2b, 0, 2b, 2a, 2 c_2, 2 \varepsilon_3 a, 2 \varepsilon_3 b, 0, 2 \varepsilon_3 b, 2 \varepsilon_3 a, 2 c_4, 2 \varepsilon_5 a, 2 \varepsilon_5 b] \\
&= [2a, 4b, 2a, 2 c_2, 2 \varepsilon_3 a, 4 \varepsilon_3 b, 2 \varepsilon_3 a, 2 c_4, 2 \varepsilon_5 a, 2 \varepsilon_5 b], \\
&[2a, 2b, 2 c_1, 2 \varepsilon_2 b, 2 \varepsilon_2 a, 0, 2 \varepsilon_2 a, 2 \varepsilon_2 b, 0, 2 \varepsilon_2 b, 2 \varepsilon_2 a, 2 c_4, 2 \varepsilon_5 a, 2 \varepsilon_5 b] \\
&= [2a, 2b, 2 c_1, 2 \varepsilon_2 b, 4 \varepsilon_2 a, 4 \varepsilon_2 b, 2 \varepsilon_2 a, 2 c_4, 2 \varepsilon_5 a, 2 \varepsilon_5 b], \\
&[2a, 2b, 0, 2b, 2a, 2 c_2, 2 \varepsilon_3 a, 2 \varepsilon_3 b, 2 c_3, 2 \varepsilon_4 b, 2 \varepsilon_4 a, 0, 2 \varepsilon_4 a, 2 \varepsilon_4 b] \\
&= [2a, 4b, 2a, 2 c_2, 2 \varepsilon_3 a, 2 \varepsilon_3 b, 2 c_3, 2 \varepsilon_4 b, 4 \varepsilon_4 a, 2 \varepsilon_4 b], \\
&[2a, 2b, 0, 2b, 2a, 0, 2a, 2b, 0, 2b, 2a, 0, 2a, 2b, 0, 2b, 2a, 2 c_6, 2 \varepsilon_7 a, 2 \varepsilon_7 b] \\
&= [2a, 4b, 4a, 4b, 4a, 4b, 2a, 2 c_6, 2 \varepsilon_7 a, 2 \varepsilon_7 b], \\
&[2a, 2b, 0, 2b, 2a, 0, 2a, 2b, 0, 2b, 2a, 0, 2a, 2b, 2 c_5, 2 \varepsilon_6 b, 2 \varepsilon_6 a, 0, 2 \varepsilon_6 a, 2 \varepsilon_6 b] \\
&= [2a, 4b, 4a, 4b, 4a, 2b, 2 c_5, 2 \varepsilon_6 b, 4 \varepsilon_6 a, 2 \varepsilon_6 b], \\
& [2a, 2b, 0, 2b, 2a, 0, 2a, 2b, 0, 2b, 2a, 2 c_4, 2 \varepsilon_5 a, 2 \varepsilon_5 b, 0, 2 \varepsilon_5 b, 2 \varepsilon_5 a, 0, 2 \varepsilon_5 a, 2 \varepsilon_5 b], \\
&= [2a, 4b, 4a, 4b, 2a, 2 c_4, 2 \varepsilon_5 a, 4 \varepsilon_5 b, 4 \varepsilon_5 a, 2 \varepsilon_5 b], \\
&[2a, 2b, 0, 2b, 2a, 0, 2a, 2b, 0, 2b, 2a, 0, 2a, 2b, 0, 2b, 2a, 0, 2a, 2b, 0, 2b, 2a, 0, 2a, 2b] \\
&= [2a, 4b, 4a, 4b, 4a, 4b, 4a, 4b, 4a, 2b]
\end{align*}
up to mirror image, where $\varepsilon_2, \varepsilon_3, \ldots,\varepsilon_7  = \pm 1$. 
Next, the genus of $K'$ is $2$ and the continued fraction of $K'$ is $[2a,2b,2c,2d]$, 
where $a,b,c,d \in {\mathbb Z} \setminus \{0 \}$. 
Then the continued fraction of $K$ is 
\[
[2a, 2b, 2c, 2d, 0, 2d, 2c, 2b, 2a, 0, 2a, 2b, 2c, 2d] 
= [2a, 2b, 2c, 4d, 2c, 2b, 4a, 2b, 2c, 2d]. 
\]

By using the above arguments, 
we obtain a criterion whether a given two-bridge knot of genus up to $5$ is minimal. 

\begin{theorem}
A two-bridge knot $K$ of genus up to $5$ is not minimal if and only if 
a continued fraction of a rational number corresponding to $K$ can be expressed as one of the following: 
\begin{align*}
& [2a, 4b, 4a, 2b], \\
& [2a, 4b, 2a, 2c_2, 2 \varepsilon_3 a, 2 \varepsilon_3 b], [2a, 4b, 4a, 4b, 4a, 2b], \\
& [2a, 2b, 2 c_1, 2\varepsilon_2 b, 2 \varepsilon_2 a, 2 c_2, 2 \varepsilon_3 a,2 \varepsilon_3 b],
[2a, 4b, 4a, 4b, 2a, 2 c_4, 2 \varepsilon_5 a, 2 \varepsilon_5 b],\\
& [2a, 4b, 4a, 2b, 2 c_3, 2 \varepsilon_4 b, 4 \varepsilon_4 a, 2 \varepsilon_4 b],  
[2a, 4b, 4a, 4b, 4a, 4b,4a,2b], \\ 
&[2a, 4b, 4a, 2b, 2 c_3, 2 \varepsilon_4 b, 2 \varepsilon_4 a, 2 c_4, 2 \varepsilon_5 a, 2 \varepsilon_ 5b], \\
&[2a, 4b, 2a, 2 c_2, 2 \varepsilon_3 a, 4 \varepsilon_3 b, 2 \varepsilon_3 a, 2 c_4, 2 \varepsilon_5 a, 2 \varepsilon_5 b], \\
&[2a, 2b, 2 c_1, 2 \varepsilon_2 b, 4 \varepsilon_2 a, 4 \varepsilon_2 b, 2 \varepsilon_2 a, 2 c_4, 2 \varepsilon_5 a, 2 \varepsilon_5 b], \\
&[2a, 4b, 2a, 2 c_2, 2 \varepsilon_3 a, 2 \varepsilon_3 b, 2 c_3, 2 \varepsilon_4 b, 4 \varepsilon_4 a, 2 \varepsilon_4 b], \\
&[2a, 4b, 4a, 4b, 4a, 4b, 2a, 2 c_6, 2 \varepsilon_7 a, 2 \varepsilon_7 b], \\
&[2a, 4b, 4a, 4b, 4a, 2b, 2 c_5, 2 \varepsilon_6 b, 4 \varepsilon_6 a, 2 \varepsilon_6 b], \\
&[2a, 4b, 4a, 4b, 2a, 2 c_4, 2 \varepsilon_5 a, 4 \varepsilon_5 b, 4 \varepsilon_5 a, 2 \varepsilon_5 b], \\
&[2a, 4b, 4a, 4b, 4a, 4b, 4a, 4b, 4a, 2b], \\
& [2a, 2b, 2c, 4d, 2c, 2b, 4a, 2b, 2c, 2d] 
\end{align*}
where $a,b,c,d,c_i \neq 0$ and $\varepsilon_i = \pm 1$.
\end{theorem}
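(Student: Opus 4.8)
The plan is to prove both directions by reducing, via the genus bound of Corollary~\ref{cor:genusinequality} and the Ohtsuki--Riley--Sakuma classification of Theorem~\ref{thm:ors}, to a finite combinatorial enumeration, and then to observe that the displayed list is exactly the output of that enumeration for $n = 2,3,4,5$.

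First I would fix a two-bridge knot $K$ of genus $n \le 5$ and suppose it is not minimal, so there is a nontrivial knot $K' \ne K$ with an epimorphism $G(K) \to G(K')$. Applying Corollary~\ref{cor:genusinequality} gives $n \ge 3g(K') - 1$, so $g(K') = k$ satisfies $3k - 1 \le n \le 5$; thus $k = 1$ for $n \in \{2,3,4\}$ and $k \in \{1,2\}$ for $n = 5$. In particular the target has genus $1$ or $2$, and, as in the framework recalled in Section~\ref{sect:ors}, any such nontrivial target $K'$ is a two-bridge knot whose reduced even continued fraction is $[2a,2b]$ (for $k=1$) or $[2a,2b,2c,2d]$ (for $k=2$), with $a,b,c,d \in \mathbb Z \setminus \{0\}$. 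Since $k \le 2 < n$, the target automatically differs from $K$, which is what makes the converse direction clean.

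Next I would invoke Theorem~\ref{thm:ors} together with the bookkeeping from the proof of Theorem~\ref{thm:mainthm}. For each admissible pair $(n,k)$ the integer $r$ in the Ohtsuki--Riley--Sakuma word is pinned down by the inequality (\ref{ineq:knr}), and the number of vanishing $c_i$ is forced by (\ref{eq:ci}) to equal $(2r+1)k + r - n$. Letting $r$ range over its finitely many admissible values and letting the zero entries range over all positions among $c_1,\dots,c_{2r}$ produces a finite list of words. For each word I would delete its zeros using the reduction $[\dots,a_{i-1},0,a_{i+1},\dots] = [\dots,a_{i-1}+a_{i+1},\dots]$ recalled in Section~\ref{sect:ors}, normalize up to mirror image, and discard the degenerate configurations ruled out by the remark following Theorem~\ref{thm:ors} (those with $c_i = 0$ and $\varepsilon_i \varepsilon_{i+1} = -1$). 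The simplified words are precisely the continued fractions exhibited in the case analysis above, and their union is the list in the statement.

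Both implications then follow. If a continued fraction of $K$ has one of the listed forms, it is by construction an Ohtsuki--Riley--Sakuma word over $[2a,2b]$ or $[2a,2b,2c,2d]$, so Theorem~\ref{thm:ors} yields an epimorphism from $G(K)$ onto a nontrivial two-bridge knot group of strictly smaller genus, and hence $K$ is not minimal; conversely, the first two steps show that every non-minimal $K$ of genus at most $5$ has one of these forms. The hard part will be the combinatorial completeness and correctness of the enumeration: I must account for every placement of the zero $c_i$'s, track how deleting a zero merges the final entry of one block with the initial entry of the following block, identify words coinciding up to mirror image so as neither to over- nor under-count, and verify that each surviving word is a reduced even continued fraction of length exactly $2n$, confirming $g(K) = n$. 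The most delicate instance is $n = 5$, $k = 1$, where $r \in \{2,3,4\}$ and the placements of the zero entries are most numerous.
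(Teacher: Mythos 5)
Your proposal follows essentially the same route as the paper: the paper's proof of this theorem is exactly the case-by-case enumeration in Section 4 (bounding $g(K')$ by Corollary \ref{cor:genusinequality}, pinning down $r$ and the number of vanishing $c_i$ via (\ref{ineq:knr}) and (\ref{eq:ci}), then listing and reducing the Ohtsuki--Riley--Sakuma words up to mirror image), which is precisely the plan you describe. The only caveat is that you state rather than carry out the enumeration, but your counts of admissible $(k,r)$ and zero placements agree with the paper's lists, and both you and the paper lean implicitly on the fact that a nontrivial epimorphic image of a two-bridge knot group is again a two-bridge knot group.
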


\section*{Acknowledgements}
The first author was partially supported by KAKENHI 
(No.\ 16K05159), Japan Society for the Promotion of Science, Japan. 
The second author was partially supported by a grant from the Simons Foundation 
(No.\ 354595 to AT).

\end{document}